\documentclass[12pt]{amsart}
\setlength{\textwidth}{6.1in}
\setlength{\oddsidemargin}{0pt}
\setlength{\evensidemargin}{0pt}
\setlength{\hoffset}{4pt}
\setlength{\parskip}{5pt}
\usepackage{amsmath}
\usepackage{amssymb}
\usepackage{amsthm}

\input xy
\xyoption{matrix}
\xyoption{arrow}

\theoremstyle{plain}
\newtheorem{theorem}{Theorem}

\newcommand{\abs}[1]{\left\vert#1\right\vert}
\newcommand{\br}[1]{\langle#1\rangle}
\newcommand{\C}{\mathcal{C}}
\newcommand{\Cat}{\mathbf{Cat}}
\newcommand{\D}{\mathcal{D}}
\newcommand{\End}{\mathop{\textnormal{End}}}
\newcommand{\Ob}{{\mathop{\textnormal{Ob}}}}
\def\P{\mathcal{P}}
\newcommand{\Set}{\mathbf{Set}}

\begin{document}
\title{Operads for Symmetric Monoidal Categories}

\author{A.\ D.\ Elmendorf}

\address{Department of Mathematics\\
Purdue University Northwest\\
 Hammond, IN 46323}
 
\email{adelmend@pnw.edu}

\date{\today}

\begin{abstract}
This paper gives an explicit description of the categorical operad whose algebras are precisely symmetric monoidal
categories.  This allows us to place the operad in a sequence of four, thus generating a sequence of four successively stricter concepts of symmetric
monoidal category.  A companion paper will use this operadic presentation to describe a vast array of underlying multicategories for
a symmetric monoidal category.  
\end{abstract}

\maketitle

This is the first of two companion papers with the aim of giving a thorough description of the possible underlying multicategories of a
symmetric monoidal category, all of which will be shown to be canonically isomorphic.  The description, however, relies on having a good
model of the categorical operad whose algebras are precisely symmetric monoidal categories, and this appears to be difficult to find
 in the literature, although perhaps well-known to experts in the field.  This paper gives such a good model and includes it in
 the description of the categorical operads underlying four successively stricter concepts of symmetric monoidal category.
 We do so by first constructing a sequence of non-symmetric operads of sets,
\[
V\to Z\to V_0\to T,
\]
whose algebras are as follows:
\begin{enumerate}
\item
Algebras over $V$ are \emph{magmas}: sets with a specified binary operation but with no other relations.
\item
Algebras over $Z$ are \emph{pointed} magmas: magmas with a specified basepoint, but with no relation
to the binary operation.
\item
Algebras over $V_0$ are \emph{unital} magmas: magmas with a strict left and right unit.
\item
Algebras over $T$ are associative monoids.
\end{enumerate}
We then apply two functors in succession: the left adjoint $L$ to the forgetful functor from symmetric operads (of sets) to non-symmetric
operads, and then degreewise the right adjoint $E$ to the set-of-objects functor from (small) categories to sets.  This gives us a sequence
\[
ELV\to ELZ\to EV_0\to ELT
\]
of categorical operads, each of which has a stronger flavor of symmetric monoidal category as its algebras.  Specifically, 
the algebras over each of these categorical operads are given as follows:
\begin{enumerate}
\item
Algebras over $ELV$ are non-unital symmetric monoidal categories.
\item
Algebras over $ELZ$ are symmetric monoidal categories.
\item
Algebras over $ELV_0$ are strictly unital symmetric monoidal categories.
\item
Algebras over $ELT$ are permutative categories.
\end{enumerate}
Item (4) on this list is a result of May (\cite{May2}, theorem 4.9) and Dunn \cite{Dunn}, since $ELT$ is precisely the categorical Barratt-Eccles
operad used in those papers.  Our main interest is in item (2), which is our main theorem.  The other two have similar proofs to
the one we will give.

The author would like to thank the anonymous referee for a thorough reading of the paper and a number of well-received comments that
improved the exposition considerably.

We begin by describing the sequence of non-symmetric operads of sets underlying our sequence of categorical operads.

Our first non-symmetric operad turns out to be free on one generator in degree 2, but it is convenient to have an explicit model for this
operad, which we call $V$, in honor of another model being the vertices
 of Stasheff's associahedra, first introduced in \cite{Stasheff}.
We start with the specification that $V(0)=\emptyset$ and $V(1)=\{1\}$, where 1 is therefore forced to be 
the identity element for the operad.  Higher degrees for $V$ are defined inductively: for $n\ge2$ we define
\[
V(n)=\coprod_{k=1}^{n-1}V(k)\times V(n-k).
\]
The idea here is that elements of $V(n)$ encode the different ways of completely parenthesizing a sequence of length $n\ge2$;
since any such parenthesization breaks at its ``last multiplication'' into two shorter parenthesizations, we obtain these parenthesizations
as ordered pairs of shorter parenthesizations.  Elements of $V(n)$ then become lists of 1's of length $n$ with a complete 
parenthesization of the list.

The composition operation on $V$, which we write as $\Gamma$ for all operads, is defined inductively as well.  Since $1\in V(1)$ must serve as the identity, 
we must have
\[
\Gamma(1;\alpha)=\alpha
\]
for any $\alpha\in V(n)$ for any $n$.  Now any element $\beta$ of $V$ other than 1 must be in higher degree, and therefore can
be expressed uniquely as an ordered pair $(\beta_1,\beta_2)$ of elements of smaller degree, so we can assume inductively
that composition has already been defined for them.  If we say $(\beta_1,\beta_2)=\beta\in V(n)$, $\beta_1\in V(k)$, and $\beta_2\in V(n-k)$, we then
define
\[
\Gamma(\beta;\alpha_1,\dots,\alpha_n)
=(\Gamma(\beta_1;\alpha_1,\dots,\alpha_k),\Gamma(\beta_2;\alpha_{k+1},\dots,\alpha_n)).
\]
Notice in particular that for the specific (and single) element $(1,1)\in V(2)$, we have
\[
\Gamma((1,1);\alpha_1,\alpha_2)=(\alpha_1,\alpha_2);
\]
in other words, $(1,1)$ produces the ``multiplication'' of parenthesized lists (actually just a juxtaposition.)

The requirements for a non-symmetric operad, say from \cite{May}, pp.\ 1-2, but deleting any mention of symmetric group
actions, are now verified by inductive use of the definition.

\begin{theorem}
The non-symmetric operad $V$ is free on the single generator $(1,1)\in V(2)$.
\end{theorem}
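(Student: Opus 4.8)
The plan is to verify the universal property directly: for every non-symmetric operad $\mathcal{O}$ and every element $x\in\mathcal{O}(2)$, I want to produce a \emph{unique} morphism of non-symmetric operads $\phi\colon V\to\mathcal{O}$ with $\phi((1,1))=x$, and to observe that the resulting bijection between operad maps $V\to\mathcal{O}$ and elements of $\mathcal{O}(2)$ is natural in $\mathcal{O}$. Since $\phi$ will be given by a formula built only from the structure maps of $\mathcal{O}$, naturality is automatic, so the real content is existence and uniqueness of $\phi$ for fixed $\mathcal{O}$ and $x$.

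Uniqueness comes first. A morphism of operads preserves identities, so $\phi(1)=1_{\mathcal{O}}$; as $V(0)=\emptyset$ and $V(1)=\{1\}$, this pins down $\phi$ in degrees $\le 1$. For $n\ge 2$ I argue by induction on $n$: any $\beta\in V(n)$ is a unique ordered pair $(\beta_1,\beta_2)$ with $\beta_1\in V(k)$, $\beta_2\in V(n-k)$, $1\le k\le n-1$, and the identity $\Gamma((1,1);\beta_1,\beta_2)=(\beta_1,\beta_2)$ recorded in the construction of $V$ shows $\beta=\Gamma((1,1);\beta_1,\beta_2)$. Since $\phi$ preserves $\Gamma$,
\[
\phi(\beta)=\Gamma\bigl(\phi((1,1));\phi(\beta_1),\phi(\beta_2)\bigr)=\Gamma\bigl(x;\phi(\beta_1),\phi(\beta_2)\bigr),
\]
and $\beta_1,\beta_2$ have strictly smaller degree, so $\phi(\beta)$ is completely determined.

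For existence I turn this around and take it as a definition: set $\phi(1)=1_{\mathcal{O}}$ and, recursively on degree, $\phi((\beta_1,\beta_2))=\Gamma\bigl(x;\phi(\beta_1),\phi(\beta_2)\bigr)$, which lands in $\mathcal{O}(n)$ when $(\beta_1,\beta_2)\in V(n)$ because $x\in\mathcal{O}(2)$. By construction $\phi$ preserves the identity, so it remains only to check that $\phi$ preserves $\Gamma$, i.e.\ that $\phi(\Gamma(\beta;\alpha_1,\dots,\alpha_n))=\Gamma(\phi(\beta);\phi(\alpha_1),\dots,\phi(\alpha_n))$ for all $\beta\in V(n)$ and all $\alpha_i$; I prove this by induction on the degree of $\beta$. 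The case $\beta=1$ is the identity axiom in $\mathcal{O}$. For $\beta=(\beta_1,\beta_2)$ with $\beta_1\in V(k)$, unwinding the inductive definition of $\Gamma$ in $V$ and then the recursive definition of $\phi$ rewrites the left-hand side as $\Gamma\bigl(x;\phi(\Gamma(\beta_1;\alpha_1,\dots,\alpha_k)),\phi(\Gamma(\beta_2;\alpha_{k+1},\dots,\alpha_n))\bigr)$; the induction hypothesis applied to $\beta_1$ and to $\beta_2$ replaces the two inner terms by $\Gamma(\phi(\beta_1);\phi(\alpha_1),\dots,\phi(\alpha_k))$ and $\Gamma(\phi(\beta_2);\phi(\alpha_{k+1}),\dots,\phi(\alpha_n))$, and the associativity axiom for a non-symmetric operad, applied in $\mathcal{O}$ with outer element $x$, collapses the whole thing to $\Gamma\bigl(\Gamma(x;\phi(\beta_1),\phi(\beta_2));\phi(\alpha_1),\dots,\phi(\alpha_n)\bigr)=\Gamma(\phi(\beta);\phi(\alpha_1),\dots,\phi(\alpha_n))$, as required.

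Finally, $\phi((1,1))=\Gamma(x;\phi(1),\phi(1))=\Gamma(x;1_{\mathcal{O}},1_{\mathcal{O}})=x$ by the identity axiom in $\mathcal{O}$, so $\phi$ has the prescribed value on the generator, and naturality in $\mathcal{O}$ is immediate since $\phi$ was defined uniformly from the structure of $\mathcal{O}$. I expect no conceptual obstacle anywhere; the one place calling for care is the last induction, where one must partition the inputs into the blocks $\alpha_1,\dots,\alpha_k$ and $\alpha_{k+1},\dots,\alpha_n$ correctly and invoke the associativity axiom of $\mathcal{O}$ in exactly the shape the recursion produces. The base cases, the matching of arities, and naturality are all routine consequences of the inductive definitions.
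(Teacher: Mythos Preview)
Your proof is correct and follows essentially the same approach as the paper: both argue the universal property directly, forcing $\phi(1)=1_{\mathcal{O}}$ and then defining $\phi(\beta_1,\beta_2)=\Gamma(x;\phi(\beta_1),\phi(\beta_2))$ by induction on degree. The only difference is that the paper relegates the verification that the resulting map preserves $\Gamma$ to a single sentence (``the verification that $g$ is actually a map of operads proceeds of course by induction''), whereas you spell out that induction explicitly and correctly identify the operad associativity axiom in $\mathcal{O}$ as the step that makes it work.
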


\begin{proof}
Let $W$ be any other non-symmetric operad with a selected element $a\in W(2)$.  Then it suffices to show that there is a unique
map $g:V\to W$ of non-symmetric operads for which $g(1,1)=a$.  

Let $e\in W(1)$ be the identity element.  Then since $g$ is to be a map of operads, we must have $g(1)=e$, and we are required
by hypothesis to have $g(1,1)=a$.  Since these are the only elements in $V(1)$ and $V(2)$, and $V(0)=\emptyset$, it remains to show
that there is only one choice for $g(\beta)$ for $\beta\in V(n)$ for $n>2$.  But we can write $\beta$ uniquely as $(\beta_1,\beta_2)$
for elements in smaller degree, and therefore we must have
\begin{gather*}
g(\beta)=g(\beta_1,\beta_2)=g(\Gamma((1,1);\beta_1,\beta_2))
\\=\Gamma(g(1,1);g(\beta_1),g(\beta_2))
=\Gamma(a;g(\beta_1),g(\beta_2)).
\end{gather*}
Since $\beta_1$ and $\beta_2$ lie in smaller degree than $\beta$, we may assume inductively that $g(\beta_1)$ and $g(\beta_2)$
are already defined, and therefore there is only one possible definition for $g$.  The verification that $g$ is actually a map
of operads proceeds of course by induction.
\end{proof}

As mentioned above, algebras over $V$ are apparently called \emph{magmas}. They are sets with a specified binary operation and no other relations.

Our second operad extends $V$ to an operad $Z$ (still non-symmetric) that turns out to be free on two generators: one in
degree 0 and one in degree 2.  
The idea is to restrict the slots in a parenthesization in which a variable can appear, and then
the rest must be filled with a selected ``identity'' element, which at this point has no further properties.  For notation, let
\[
\P_n(j)=\{S\subset\{1,\dots,j\}:\abs{S}=n\}.
\]
Of course, $\P_n(j)$ is empty unless $j\ge n$.  We define the components of $Z$ as
\[
Z(n)=\coprod_{j\ge n}V(j)\times\P_n(j).
\]
The idea is that an element of $Z(n)$ consists of a parenthesization of a list of length at least $n$, with $n$ slots selected out in which
a variable can appear; the rest are to be reserved for the ``identity.''

Since $Z(n)$ has infinitely many elements for $n\ge0$, we can't define operations inductively on $n$.  However,
every element of $Z(n)$ is uniquely of the form $(\beta,S)$ for $\beta\in V(j)$ and $S\in\P_n(j)$, and we \emph{can} use
induction on $j$, which we'll call the \emph{internal} degree of $(\beta, S)$.  
In particular, there are precisely two elements with $j=1$ (the smallest possible value, since $V(0)=\emptyset)$:
$(1,\emptyset)$ and $(1,\{1\})$.  We consider $(1,\emptyset)$ the generator in degree 0, and $(1,\{1\})$ the identity for
the operad $Z$.  The generator in degree 2 is $((1,1),\{1,2\})$.

In order to define the composition in $Z$, we need to decompose elements in terms of elements of smaller degree, just
as we did for $V$.  The difference is that we use the internal degree rather than the degree of the operad element, which
is generally smaller.  The basic observation is that if we have an element $(\beta,S)$ with $\beta\in V(j)$ for $j\ge2$ and
$S\in \P_n(j)$, we have the unique decomposition $\beta=(\beta_1,\beta_2)$ with, say $\beta_1\in V(k)$ and $\beta_2\in V(j-k)$.
We can then decompose $S$ as $S_1\cup S_2'$, where
\[
S_1=S\cap\{1,\dots,k\}\text{ and }S_2'=S\cap\{k+1,\dots,j\}.
\]
Now we just subtract $k$ from each element of $S_2'$ to obtain a subset $S_2\subset\{1,\dots,j-k\}$, and define
\[
(\beta,S)=(\beta_1,S_1)\cdot(\beta_2,S_2).
\]
This decomposition is unique, and allows us to define composition inductively, as follows.  First,
since we now have elements in total degree 0, they compose with an empty list, and in particular we define
\[
\Gamma((1,\emptyset);)=(1,\emptyset)
\]
to give us the fundamental nullary operation.  Since $(1,\{1\})$ is to be the identity, we must have
\[
\Gamma((1,\{1\});(\alpha,T))=(\alpha,T)
\]
for any $(\alpha,T)$ an element of $Z$ in any degree.

All other elements $(\beta,S)$ have higher internal degree, and therefore decompose as $(\beta_1,S_1)\cdot(\beta_2,S_2)$
into elements of lower internal degree.  Let's say that $\abs{S_1}=k$ and $\abs{S_2}=n-k$.  We can now define composition inductively as
\begin{gather*}
\Gamma((\beta,S);(\alpha_1,T_1),\dots,(\alpha_n,T_n))
\\=\Gamma((\beta_1,S_1);(\alpha_1,T_1),\dots,(\alpha_k,T_k))\cdot\Gamma((\beta_2,S_2);(\alpha_{k+1},T_{k+1}),\dots,(\alpha_n,T_n)).
\end{gather*}

As with the operad $V$, the verifications that this definition satisfies the necessary identities follow from repeated induction.

\begin{theorem}
The non-symmetric operad $Z$ is free on one generator $(1,\emptyset)$ in degree 0 and one generator $((1,1),\{1,2\})$ in degree 2.
\end{theorem}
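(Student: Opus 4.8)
The plan is to recycle, almost verbatim, the proof that $V$ is free on $(1,1)$, with the \emph{internal} degree $j$ of an element $(\beta,S)\in Z(n)$ playing the role that the operad degree played there. Let $W$ be any non-symmetric operad equipped with a chosen element $b\in W(0)$ and a chosen element $a\in W(2)$; it suffices to produce a unique map $g\colon Z\to W$ of non-symmetric operads with $g(1,\emptyset)=b$ and $g((1,1),\{1,2\})=a$.

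First I would settle the base case, internal degree $j=1$. The only two elements of internal degree $1$ are $(1,\emptyset)\in Z(0)$ and $(1,\{1\})\in Z(1)$; the former must go to $b$ by hypothesis, while the latter is the operadic identity of $Z$ and so must go to the identity $e\in W(1)$, since a map of operads preserves identities. Thus $g$ is forced, uniquely, on internal degree $1$.

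For the inductive step I would first record that the degree-$2$ generator $((1,1),\{1,2\})$ produces the product $\cdot$: from the definition of $\Gamma$ in $Z$, and using that $((1,1),\{1,2\})$ decomposes as $(1,\{1\})\cdot(1,\{1\})$ with a single variable slot in each factor, one gets
\[
\Gamma(((1,1),\{1,2\});(\gamma_1,U_1),(\gamma_2,U_2))=(\gamma_1,U_1)\cdot(\gamma_2,U_2)
\]
for all $(\gamma_1,U_1),(\gamma_2,U_2)$ in $Z$, including the case in which some $U_i$ is empty, so that the corresponding input is a nullary operation. Now any $(\beta,S)$ of internal degree $j\ge2$ has, by the construction of composition in $Z$, the unique decomposition $(\beta,S)=(\beta_1,S_1)\cdot(\beta_2,S_2)$ with $\beta_1\in V(k)$ and $\beta_2\in V(j-k)$ for some $1\le k\le j-1$, so that both factors have internal degree strictly smaller than $j$. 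Hence
\begin{gather*}
g(\beta,S)=g(\Gamma(((1,1),\{1,2\});(\beta_1,S_1),(\beta_2,S_2)))\\
=\Gamma(g((1,1),\{1,2\});g(\beta_1,S_1),g(\beta_2,S_2))=\Gamma(a;g(\beta_1,S_1),g(\beta_2,S_2)),
\end{gather*}
and since $g$ is already pinned down on the two lower-internal-degree factors, this leaves exactly one possible value for $g(\beta,S)$; uniqueness of the decomposition guarantees the recursion is unambiguous. This establishes uniqueness of $g$.

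For existence one reads the displayed recursion as the \emph{definition} of $g$ (setting $g(1,\emptyset)=b$ and $g(1,\{1\})=e$ in internal degree $1$), and then checks, by induction on internal degree, that $g$ preserves identities and commutes with $\Gamma$ in general. I expect this last verification to be the only step requiring real care: one must feed an arbitrary composite $\Gamma((\beta,S);(\alpha_1,T_1),\dots,(\alpha_n,T_n))$ through the decomposition $(\beta,S)=(\beta_1,S_1)\cdot(\beta_2,S_2)$, invoke the defining formula for $\Gamma$ in $Z$, and match the outcome against $\Gamma(a;-,-)$ of the images of the two halves, the associativity and unit axioms in $W$ closing the induction — the same bookkeeping already invoked (and elided) in showing that $Z$ is a non-symmetric operad.
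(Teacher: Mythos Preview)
Your argument is correct and follows essentially the same route as the paper: you invoke the universal property, induct on the internal degree, use the unique decomposition $(\beta,S)=(\beta_1,S_1)\cdot(\beta_2,S_2)$ together with the identity $\Gamma(((1,1),\{1,2\});-,-)=-\cdot-$ to force the value of $g$, and defer the verification that the resulting $g$ is an operad map to a routine induction. If anything, you are slightly more explicit than the paper in separating the uniqueness and existence halves and in spelling out the internal-degree-$1$ base case.
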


\begin{proof}
Suppose given a non-symmetric operad $W$, together with an element $p\in W(0)$ and an element $m\in W(2)$.  It suffices to show that
there is a unique map of operads $g:Z\to W$ for which $g(1,\emptyset)=p$ and $g((1,1),\{1,2\})=m$.

Let $e\in W(1)$ be the operad identity for $W$.  Then we must have $g(1,\{1\})=e$, and together with the requirement that $g(1,\emptyset)=p$,
this provides us with the initial conditions for induction on internal degree.  Note that in analogy to the situation for $V$, we have
\[
\Gamma(((1,1),\{1,2\});(\beta_1,S_1),(\beta_2,S_2))=(\beta_1,S_1)\cdot(\beta_2,S_2). 
\]
Therefore when we have $(\beta,S)=(\beta_1,S_1)\cdot(\beta_2,S_2)$, where $\beta$ has degree at least 2, we must have
\[
g(\beta,S)=g(\Gamma(((1,1),\{1,2\});(\beta_1,S_1),(\beta_2,S_2)))
=\Gamma(m;g(\beta_1,S_1),g(\beta_2,S_2)),
\]
where this expression has been previously defined by induction.  As before, the verification that this defines a map of operads proceeds
also by induction.
\end{proof}

There is a canonical map of operads $V\to Z$ given by sending the free generator in dimension 2 in $V$ to the free generator in
dimension 2 in $Z$.  In terms of the explicit descriptions, it sends an element $\beta\in V(n)$ to $(\beta,\{1,\dots,n\})\in Z(n)$.

The algebras over $Z$ are \emph{pointed} magmas: sets with a specified binary operation and a selected element, but with no
relations.  The map of operads $V\to Z$ induces the forgetful functor that ignores the selected element.

Our third non-symmetric operad we agree to call $V_0$, since it simply adjoins an element 0 to $V$.  We declare $V_0(0)=\{0\}$, and $V_0(n)=V(n)$ for $n\ge1$.
What remains is to define the composition operation, and in order to do so, we need a binary operation
\[
V_0(j)\times V_0(k)\to V_0(j+k),
\]
but we can't just form ordered pairs in $V$ when one of the coordinates is 0.  We solve this problem by 
defining an operation given by the following formula:
\[
\beta_1\cdot\beta_2=
\begin{cases}
\beta_1&\text{ if }\beta_2=0,\\
\beta_2&\text{ if }\beta_1=0,\\
(\beta_1,\beta_2)&\text{ otherwise.}
\end{cases}
\]
We now can define composition by induction basically as we did with $V$.  Remembering that an element in degree 0
composes with the empty list, we declare that 
\[
\Gamma(0;)=0\text{ and }\Gamma(1;\alpha)=\alpha
\]
to start the induction. Then inductively, if $\beta\in V_0(n)$ for $n\ge2$, we must have $\beta=(\beta_1,\beta_2)$ where
$\beta_1\in V_0(k)$ and $\beta_2\in V_0(n-k)$ for $1\le k\le n-1$, and we define
\[
\Gamma((\beta_1,\beta_2);\alpha_1,\dots,\alpha_n)
=\Gamma(\beta_1;\alpha_1,\dots,\alpha_k)\cdot\Gamma(\beta_2;\alpha_{k+1},\dots,\alpha_n).
\]
As before, the requirements for an operad composition are now verified by induction.

The algebras over $V_0$ are \emph{unital} magmas: sets with a binary operation and a specified
unit (both left and right), but no other relations. 

There is a canonical map of operads $Z\to V_0$ sending the free generator in dimension 2 to $(1,1)\in V_0(2)$, and the free generator
in dimension 0 to $0\in V_0(0)$.   The map of operads induces the forgetful functor that remembers the unit element,
but forgets its unit properties.

Our fourth and final non-symmetric operad is the terminal one $T$: its components $T(n)$, $n\ge0$, all consist of a single point.  Its algebras are strictly
associative monoids.  The comparison map $V_0\to T$ is the terminal map collapsing all elements of $V_0(n)$ to a single point.  
The induced forgetful functor forgets the associativity of the product, but remembers the unit element and its unit properties.

We now have the following sequence of maps of non-symmetric operads of sets:
\[
V\to Z\to V_0\to T.
\]
Our next step is to apply the free symmetric operad functor $L$ to each of these non-symmetric operads, resulting in a sequence
\[
LV\to LZ\to LV_0\to LT
\]
of operads (of sets).  We recall the construction of $L$ from \cite{EM2}, proof of Theorem 4.7, for the convenience of the reader.
Note that in \cite{EM2}, this functor is called $L''$, and applies to general multicategories, not just operads, which are simply multicategories
with a single object.

Suppose $Q$ is a nonsymmetric operad, so has component sets $Q(0), Q(1), Q(2),\dots$.  Then 
$L$ converts $Q$ into a symmetric operad $LQ$ whose component
sets are $LQ(n)=Q(n)\times\Sigma_n$, where $\Sigma_n$ is the $n$'th symmetric group.  The symmetric group action
is just regular (right) multiplication by the $\Sigma_n$ factor.  Composition is now forced by the equivariance requirements
for an operad, where $(r_i,\tau_i)\in LQ(k_i)$:
\begin{align*}
&\Gamma((q,\sigma);(r_1,\tau_1),\dots,(r_n,\tau_n))
\\=(&\Gamma(q;r_{\sigma^{-1}1},r_{\sigma^{-1}2},\dots,r_{\sigma^{-1}n}),\sigma\br{k_1,\dots,k_n}\circ(\tau_1\oplus\cdots\oplus\tau_n)).
\end{align*}

Since $L$ is a left adjoint, the algebras over $Q$ and $LQ$ are the same.
This is because an algebra $X$ over $Q$ consists of a map of non-symmetric operads $Q\to\End(X)$, where $\End(X)$ is the
endomorphism operad of the set $X$.  But since $\End(X)$ supports the structure of a symmetric operad, the map $Q\to\End(X)$
determines and is determined by the adjoint map of symmetric operads $LQ\to\End(X)$, giving $X$ the structure of algebra over $LQ$.

The last step is to apply the right adjoint $E$ to the set-of-objects functor $\Cat\to\Set$ degreewise to each of these operads.
Recall that if $X$ is a set, the category $EX$ has $X$ as its set of objects, and exactly one element in each morphism set $EX(x,y)$,
which is therefore a canonical isomorphism between $x$ and $y$.  Since $E$ is a right adjoint, it preserves products, and therefore operad structures.
We therefore obtain a functor, which we also call $E$, from symmetric operads
of sets to symmetric operads of categories; these last we'll just call categorical operads.
This gives us our sequence of categorical operads
\[
ELV\to ELZ\to ELV_0\to ELT.
\]
These in turn give us a sequence of algebras consisting of categories with successively more restrictive structure, as mentioned
at the beginning of the paper:  
\begin{enumerate}
\item
Algebras over $ELV$ are non-unital symmetric monoidal categories.
\item
Algebras over $ELZ$ are symmetric monoidal categories.
\item
Algebras over $ELV_0$ are strictly unital symmetric monoidal categories.
\item
Algebras over $ELT$ are permutative categories.
\end{enumerate}
Our main interest is in item (2), which we now prove; items (1) and (3) are proven similarly, and (4) was proven in the cited papers 
of May and Dunn.
To avoid having to refer repeatedly to $ELZ$, let's rewrite that categorical operad as $Y$.

\begin{theorem}
Symmetric monoidal structures on a category $\C$ correspond bijectively to algebra structures on $\C$ over $Y$.  The bijection extends
to an isomorphism between the category of $Y$-algebras and the category of symmetric monoidal categories and strict maps.
\end{theorem}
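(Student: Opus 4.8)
\emph{Proof plan.} I would begin by unwinding the definition: an algebra structure on $\C$ over $Y$ is a family of functors $\theta_n\colon Y(n)\times\C^n\to\C$, $n\ge0$, satisfying the unit, associativity, and equivariance axioms for an operad action. The structural feature I would exploit throughout is that, by construction of $E$, each $Y(n)=E(Z(n)\times\Sigma_n)$ is a groupoid with exactly one morphism between any two objects; hence every diagram in $Y(n)$ commutes, and so does its image under the functor $\theta_n(-;x_1,\dots,x_n)\colon Y(n)\to\C$ obtained by fixing objects $x_1,\dots,x_n$ of $\C$. I would single out the objects $g_0=(1,\emptyset)\in Z(0)$, the operad identity $(1,\{1\})\in Z(1)$, and $g_2=((1,1),\{1,2\})\in Z(2)$, together with the twist of $g_2$ by the transposition in $\Sigma_2$.

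For the passage from $Y$-algebras to symmetric monoidal categories I would set $\mathbf 1:=\theta_0(g_0)$ and $x\otimes y:=\theta_2((g_2,e);x,y)$, with $e$ the identity of the relevant symmetric group; the operad unit axiom gives $\theta_1((1,\{1\});x)=x$, and $\otimes$ is a bifunctor because $\theta_2$ is a functor. Using the explicit composition in $Z$ recalled above, together with the operad associativity axiom and the fact that operad composition in $Y$ is a functor, I would identify $\theta_n$ of the relevant objects $(\beta,S,\sigma)$ with the iterated tensor products read off from $\beta$, $S$, and $\sigma$, copies of $\mathbf 1$ filling the slots outside $S$; for example $\theta_3$ of the two objects of internal degree $3$ with $S=\{1,2,3\}$ and trivial permutation are $(x\otimes y)\otimes z$ and $x\otimes(y\otimes z)$, while $\theta_1$ of the objects $((1,1),\{1\})$ and $((1,1),\{2\})$ are $x\otimes\mathbf 1$ and $\mathbf 1\otimes x$. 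I would then define the associator, the two unitors, and the symmetry to be $\theta_3$, $\theta_1$, and $\theta_2$ applied to the unique morphisms of $Y(3)$, $Y(1)$, and $Y(2)$ between the respective objects; their naturality is immediate from functoriality of $\theta_n$ in the $\C^n$ variables. The pentagon, triangle, hexagons, the relation $\gamma^2=\mathrm{id}$, and the unit conditions then each assert the commutativity of a diagram in $\C$ which, after those identifications, is the image under some $\theta_m(-;x_1,\dots,x_m)$, $m\le4$, of a diagram in the contractible groupoid $Y(m)$, and so commutes for free.

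For the reverse passage, starting from a symmetric monoidal category $(\C,\otimes,\mathbf 1,a,l,r,\gamma)$, I would send an object $(\beta,S,\sigma)\in Y(n)$ and a tuple $(x_1,\dots,x_n)$ to the product, parenthesized by $\beta$, of the length-$j$ list whose $i$-th slot in $S$ holds $x_{\sigma^{-1}(i)}$ and whose remaining slots hold $\mathbf 1$, and send the unique morphism of $Y(n)$ between two objects to the unique coherence isomorphism between the corresponding products with the evident underlying permutation of the $x$'s. Existence and uniqueness of that isomorphism — hence functoriality of $\theta_n$ on $Y(n)$, as well as compatibility of $\theta$ with the composition formula for $LZ$ and with the $\Sigma_n$-actions — is exactly Mac Lane's coherence theorem for symmetric monoidal categories, invoked with $\mathbf 1$ allowed as a constant; functoriality of $\theta_n$ in the $\C^n$ variables is the naturality of coherence isomorphisms.

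It would then remain to check that the two constructions are mutually inverse and that morphisms correspond. Passing from a symmetric monoidal structure to its $Y$-algebra and back returns $\otimes$ and $\mathbf 1$ on the nose, and returns $a,l,r,\gamma$ because the extracted isomorphisms are the coherence isomorphisms attached to the small parenthesizations in question, which coherence identifies with the originals. Passing from a $Y$-algebra $(\C,\theta)$ to its symmetric monoidal structure and back recovers $(\C,\theta)$: the reconstructed $\theta_n$ agrees with the original on objects of $Y(n)$ by the operad-composition identifications, and on the unique morphisms of $Y(n)$ because $\theta_n$ of such a morphism is, by those same identifications, a composite of instances of $a^{\pm1},l^{\pm1},r^{\pm1},\gamma$, hence — by coherence — the unique coherence isomorphism with its underlying permutation, which is exactly what the reconstruction assigns; and a functor out of the contractible groupoid $Y(n)$ is pinned down by this data. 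The same accounting shows that a functor $\C\to\D$ underlies a map of $Y$-algebras precisely when it is a strict symmetric monoidal functor, giving the asserted isomorphism of categories. I expect the real work to sit in the reverse construction: making precise that the objects of $Y(n)$ are the objects of the free symmetric monoidal category on $n$ generators with $\mathbf 1$ available as a constant, and that the unique morphisms of $Y(n)$ correspond exactly to the coherence isomorphisms, so that Mac Lane's theorem genuinely applies.
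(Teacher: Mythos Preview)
Your proposal is correct and follows essentially the same approach as the paper: extract $(\otimes,\mathbf 1,a,l,r,\gamma)$ from the images of the designated objects and unique morphisms in the contractible groupoids $Y(n)$, verify the coherence axioms by noting they are images of diagrams in $Y(m)$ for $m\le4$, and build the reverse direction by invoking Mac Lane's coherence theorem to extend from the generators. The paper treats the two directions in the opposite order and is somewhat terser about the mutual-inverse and morphism checks, but the substance is the same.
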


\begin{proof}
Suppose first that $\C$ has the structure of a symmetric monoidal category, so we are given $e\in\Ob\C$, $\mu:\C\times\C\to\C$,
and natural isomorphisms $\alpha:(a\otimes b)\otimes c\cong a\otimes(b\otimes c)$, where we write $a\otimes b$ for $\mu(a,b)$,
as well as $\eta_r:a\otimes e\cong a$, $\eta_l:e\otimes a\cong a$, and $\tau:a\otimes b\cong b\otimes a$, all subject to coherence relations.  
We adopt the convention that when giving an object in $Z$, we refer to the corresponding object in $LZ$
with the identity in the symmetric group attached.  
Note that we obtain as a result an element of the operad of objects of $Y$.

Now we can give $\C$ the structure
of $Y$-algebra as follows.

We send the generating nullary operation $(1,\emptyset)$ in $Z$ to the selected identity $e\in\Ob\C$.  We send the generating binary operation
$((1,1),\{1,2\})$ in $Z$ to the multiplication map $\mu$.  Since the objects of $Y$ are the free operad on these two generators, this completely
describes the action of the objects of the operad.  

The associator $\alpha:(a\otimes b)\otimes c\cong a\otimes(b\otimes c)$ is the image of the unique isomorphism between the two
3-morphisms $(((1,1),1),\{1,2,3\})$ and $((1,(1,1)),\{1,2,3\})$.  
The commutator $\tau:a\otimes b\cong b\otimes a$ is the image of the unique isomorphism
between 
the two 2-morphisms
$(((1,1),\{1,2\}),1_2)$ and $(((1,1),\{1,2\}),t)$, where $1_2$ is the identity element of $\Sigma_2$, and $t$ is the non-identity
element of $\Sigma_2$.  The right unit isomorphism $\eta_r:a\otimes e\cong a$ is the image of the unique isomorphism between the two
1-morphisms $((1,1),\{1\})$ and $(1,\{1\})$, and the left unit isomorphism $\eta_l:e\otimes a\cong a$ is the image of the unique
isomorphism between $((1,1),\{2\})$ and $(1,\{1\})$.  
The extension now to an action of all of $Y$ is the content of the coherence theorem
for symmetric monoidal categories: see \cite{Mac}, XI.1 Theorem 1.

Now suppose given an action of $Y$ on $\C$.  We produce a symmetric monoidal structure on $\C$ in the exact inverse fashion to
the previous construction.  In particular, the nullary operation $(1,\emptyset)$ produces an object $e\in\C$ that provides an identity,
and the binary operation $((1,1),\{1,2\})$ creates an operation $\mu:\C\times\C\to\C$ that will provide us with the product on $\C$.
The right unit isomorphism $\eta_r:a\otimes e\cong a$ is the image of the unique isomorphism between the 1-ary operations $((1,1),\{1\})$ and $(1,\{1\})$,
while the left unit isomorphism $\eta_l:e\otimes a\cong a$ is the image of the unique isomorphism between $((1,1),\{2\})$ and 
$(1,\{1\})$.
The transposition isomorphism is the image of the unique isomorphism between the binary operations $(((1,1),\{1,2\}),1_2)$ and $(((1,1),\{1,2\}),t)$.
And the associator $(a\otimes b)\otimes c\cong a\otimes(b\otimes c)$ is the image of the unique isomorphism between the 3-ary
operations $(((1,1),1),\{1,2,3\})$ and $((1,(1,1)),\{1,2,3\})$.

All the necessary coherence diagrams commute since they are the images of diagrams of $n$-ary operations in $Y$ for a fixed $n$, and
all diagrams commute for any category in the image of $E$.  
In detail, the left-right unit identity diagram
\[
\xymatrix{
e\otimes e\ar[rr]^-{=}\ar[dr]_-{\eta_r}
&&e\otimes e\ar[dl]^-{\eta_l}
\\&e
}
\]
commutes, being a diagram of nullary operations.
The transposition diagram
\[
\xymatrix{
a\otimes e\ar[rr]^-{\tau}\ar[dr]_-{\eta_r}&&e\otimes a\ar[dl]^-{\eta_l}
\\&a
}
\]
is a diagram of 1-ary operations, and therefore commutes.
The unit coherence diagram
\[
\xymatrix{
(a\otimes e)\otimes b\ar[rr]^-{\alpha}\ar[dr]_-{\eta_r\otimes1}
&&a\otimes(e\otimes b)\ar[dl]^-{1\otimes\eta_l}
\\&a\otimes b
}
\]
is a diagram of binary operations, and therefore commutes.
The transposition squared being the identity says
\[
\xymatrix{
a\otimes b\ar[r]^-{\tau}\ar[dr]_{=}&b\otimes a\ar[d]^-{\tau}
\\&a\otimes b
}
\]
commutes, but this is also a diagram of binary operations, so it does commute.  The hexagon
\[
\xymatrix{
(a\otimes b)\otimes c\ar[r]^-{\alpha}\ar[d]_-{\tau\otimes1}
&a\otimes(b\otimes c)\ar[r]^-{\tau}
&(b\otimes c)\otimes a\ar[d]^-{\alpha}
\\(b\otimes a)\otimes c\ar[r]_-{\alpha}
&b\otimes(a\otimes c)\ar[r]_-{1\otimes\tau}
&b\otimes(c\otimes a)
}
\]
is a diagram of 3-ary operations, and therefore commutes, and the pentagon
\[
\xymatrix{
((a\otimes b)\otimes c)\otimes d\ar[rr]^-{\alpha}\ar[d]_-{\alpha\otimes1}
&&(a\otimes b)\otimes(c\otimes d)\ar[d]^-{\alpha}
\\(a\otimes(b\otimes c))\otimes d\ar[r]_-{\alpha}
&a\otimes((b\otimes c)\otimes d)\ar[r]_-{1\otimes\alpha}
&a\otimes(b\otimes(c\otimes d))
}
\]
is a diagram of 4-ary operations, and therefore also commutes.  Therefore $\C$ is provided with the structure of a symmetric
monoidal category.  It is straightforward to see that these two constructions are inverse to each other.

Finally, we observe that this bijection extends to an isomorphism between the category of $Y$-algebras and the category
of symmetric monoidal categories and strict maps.  Suppose given two $Y$-algebras $\C$ and $\D$ and a map $F:\C\to\D$
of $Y$-algebras.  Then $F$ must preserve the unit elements, since they are given as the image of the generating nullary operation
in $Y$.  Further, $F$ must preserve the product, since it given as the image of the generating binary operation in $Y$.  And all
the structure maps for a symmetric monoidal category must also be preserved, again because they are given as the images
of maps between operations in $Y$.  Consequently, $F$ must be a strict map of symmetric monoidal categories.  

Conversely, if $F:\C\to\D$ is a strict map of symmetric monoidal categories, then the entire $Y$-algebra structure must be preserved.
On objects, the generating nullary and binary operations are preserved, since they are given by the unit and product.  And the
morphisms in $Y$ must also be preserved, since the structure morphisms are preserved, and the coherence theorem
for symmetric monoidal categories then shows that all the morphisms are preserved.  This concludes the proof.
\end{proof}


\begin{thebibliography}{0}

\bibitem{Dunn}
Gerald Dunn, \emph{$E_n$-monoidal categories and their group completions}, J.\ Pure Appl.\ Algebra {\bf95} (1994), 27-39.
\bibitem{EM2}
A.\ D.\ Elmendorf and M.\ A.\ Mandell,
\emph{Permutative categories, multicategories, and algebraic K-theory}, Algebr.\ Geom.\ Topol. {\bf 9} (2009), 2391-2441.
\bibitem{Mac}
Saunders Mac Lane, \emph{Categories for the Working Mathematician}, 2nd ed., {Graduate Texts in Mathematics} {\bf5},
Springer-Verlag New York, 1998.
\bibitem{May} 
J.\ Peter May, \emph{The Geometry of Iterated Loop Spaces}, Lecture Notes in Mathematics {\bf 271}: 
Springer-Verlag, Berlin, Heidelberg, New York 1972.
\bibitem{May2}
J.\ Peter May, \emph{$E_\infty$ spaces, group completions, and permutative categories}, in: \emph{New Developments in
Topology}, London Mathematical Society Lecture Note Series {\bf11}, G.\ Segal, ed., Cambridge University Press, Cambridge, 1974. 
\bibitem{Stasheff}
J.\ D.\ Stasheff, \emph{Homotopy associativity of $H$-spaces. I}, Trans.\ Amer.\ Math.\ Soc.\ {\bf108} (1963), 275-292.


\end{thebibliography}
\end{document}